\renewcommand{\thesection}{\arabic{section}}
    \DeclareMathAlphabet{\pzocal}{OMS}{zplm}{m}{n}
    \DeclareMathAlphabet{\mathcal}{OMS}{cmsy}{m}{n}
    \def\mathbi#1{\textbf{\em #1}} 
\def\thanks#1{\protected@xdef\@thanks{\@thanks
        \protect\footnotetext{#1}}}
\newcommand{\Rn}{\ensuremath{{\mathbf{R}^{n}}}}
\newcommand{\R}{\ensuremath{{\mathbf{R}}}}
\newcommand{\Rpo}{\ensuremath{{\mathbf{R}_{\geq 0}}}}
\renewcommand{\l}{{\frak{L}_F}}
\newcommand{\ldf}{{\frak{L}_{\mathcal{D}(F)}}}
\renewcommand{\S}{{\mathbf{S}^{n-1}}}
\newcommand{\Integrand}{\mathbf{I}}
\newcommand{\graph}{\text{\normalfont Graph}}
\newcommand{\hypo}{\text{\normalfont Hypo}}
\newcommand{\Extr}{\text{\normalfont Extr}}
\newcommand{\Geo}{\ensuremath{F}\text{\normalfont -Geo}}
\newcommand{\w}{\mathcal{W}}
\renewcommand{\i}{\mathcal{I}}
\renewcommand{\a}{\mathcal{A}}
\renewcommand{\d}{\mathcal{D}}
\newcommand{\Star}{\frak{P}}
\newcommand{\Hp}{{\pzocal{H}^{n-1}}}
\renewcommand{\Tilde}{\widetilde}
\renewcommand{\hat}{\widehat}
\newcommand{\ac}{\text{\normalfont AC}}
\newcommand{\Ort}{\text{\normalfont Ort}}
\newcommand{\norm}[1]{\lVert #1 \rVert}
\newcommand{\spn}{\ensuremath{\text{\normalfont span}^+}}
\newcommand{\Cont}{\text{\normalfont Cont}}
\renewcommand{\theta}{\vartheta}
\renewcommand{\phi}{\varphi}
\renewcommand{\epsilon}{\varepsilon}
\newtheoremstyle{defbfnote}
    {15pt} 
    {15pt} 
    {} 
    {} 
    {\bfseries} 
    {.} 
    {.5 em} 
    {\thmname{#1}\thmnumber{ #2}\thmnote{ (#3)}}
\newtheoremstyle{bfnote}
    {15pt} 
    {15pt} 
    {\itshape} 
    {} 
    {\bfseries} 
    {.} 
    {.5 em} 
    {\thmname{#1}\thmnumber{ #2}\thmnote{ (#3)}}
\newtheoremstyle{scesempio}
    {10pt} 
    {10pt} 
    {} 
    {} 
    {\bfseries} 
    {.} 
    {.5 em} 
    {\thmname{#1}\thmnumber{ #2}\thmnote{ (#3)}}
\newtheoremstyle{bfesercizio}
    {20pt} 
    {10pt} 
    {\itshape} 
    {} 
    {\bfseries} 
    {.} 
    {.5 em} 
    {\thmname{#1}\thmnumber{ #2}\thmnote{ (#3)}}
\newtheoremstyle{scsvolgimento}
    {10pt} 
    {30pt} 
    {} 
    {} 
    {\sc} 
    {.} 
    {.5 em} 
    {\thmname{#1}}
\theoremstyle{bfnote}
\newtheorem{theorem}{Theorem}
\newtheorem{corollary}[theorem]{Corollary}
\newtheorem{lemma}[theorem]{Lemma}
\theoremstyle{defbfnote}
\newtheorem*{definition}{Definition}
\theoremstyle{bfesercizio}
\theoremstyle{scsvolgimento}
\theoremstyle{scesempio}
\newtheorem{example}[theorem]{Example}
\theoremstyle{bfnote}
\title{\large \bfseries\uppercase{Complete Characterization of Anisotropic  Geodesics in the Euclidean Space}}
\author{{\sc pietro aldrigo\textsuperscript{\textdagger}} \thanks{\textsuperscript{\textdagger}{\sc Universit\"at Bern, Mathematisches Institut (MAI), Sidlerstrasse 12, 3012 Bern, Schweiz}. Email address: \texttt{pietro.aldrigo@unibe.ch}}
\thanks{2020  \emph{Mathematics Subject Classification}: 49K21, 49Q10, 51M25.}
\thanks{\emph{Key words}: Geodesics, anisotropic energies, isoperimetric sets.}
\thanks{The author is supported by the Swiss National Science Foundation, grant number 200021-228012. }
}
\date{}
\begin{document}

\maketitle
\begin{abstract}
    Let $F$ be a lower semicontinuous, 1-homogeneous positive function defined on $\Rn$. We provide a characterization of absolutely continuous paths that minimize the anisotropic $F$-length between two points. The characterization is achieved by establishing a connection between the minimizing paths and the geometry of the anisotropic $F$-isoperimetric set.
\end{abstract}


\section{Introduction and main results}\label{intro}

The study of geodesics has a rich history in several areas of mathematics (see e.g. \cite{busemann2005geometry,fialkow1939conformal,karney2013algorithms,leonardi2008end,montgomery2002tour}) and its applications range from path planning in robotics \cite{wu2016path,4376799} to image processing \cite{demonceaux2011central}, and more. On the other hand, significant advances have been made in the study of the geometric properties of sets arising as critical points of anisotropic functionals (e.g. \cite{de2018rectifiability,de2020existence,de2018minimization,de2020uniqueness,franceschi2023isoperimetric,leonardi2005isoperimetric,Monti+2008+93+121,taylor1974existence,taylor1975unique,taylor1978crystalline,wulff1901frage}). The present work lies at the intersection of the two aforementioned fields. We present a complete characterization of anisotropic geodesics (definition given below) in Euclidean space, achieved through the establishment and application of a connection between these geodesics and the geometric properties of anisotropic $F$-isoperimetric set.

\subsection{The $\mathbi{F}$-geodesic problem}
Throughout this work $n$ is an integer greater or equal than 2. We denote by $\S\subseteq \Rn$ the $(n-1)$-dimensional unit sphere centered at the origin. 

A function $F:\Rn\to\R$ is 1-homogeneous if $F(\lambda x)= \lambda F(x)$ for every $\lambda\geq 0$ and $x\in \Rn$. Observe that any 1-homogeneous function is univocally  determined by its values on $\S$. We say that a 1-homogeneous function is positive if it is positive in $\S$. An \emph{integrand} is a lower semicontinuous, 1-homogeneous positive function and the set of all integrands is denoted by $\Integrand$.

Denote by $\ac([0,1];\Rn)$ the family of absolutely continuous functions $\gamma:[0,1]\to\Rn$. It is well-known that if $\gamma\in \ac([0,1];\Rn)$ then $\gamma$ admits a derivative $\dot\gamma(t)$ at almost every $t\in (0,1)$ and  $t\mapsto \dot\gamma(t)$ belongs to $L^1([0,1])$. We say that $\gamma$ is \emph{regular} if $|\dot\gamma(t)|>0$ for almost every $t\in (0,1)$.

\begin{definition}[Anisotropic $\mathbi{F}$-length]
    Let $\gamma\in \ac([0,1];\Rn)$ and let $F$ be an integrand. The  anisotropic $F$-length (or $F$-length) of $\gamma$  is the quantity 
    \begin{align*}
        \l(\gamma):= \int_0^1 F(\dot\gamma(t))\,dt.
    \end{align*}
\end{definition}

Observe that the definition of $F$-length is invariant under reparametrization of $\gamma$, i.e. if $\gamma,\rho\in \ac([0,1];\Rn)$ and $\rho(t)=\gamma(\tau(t))$ for some strictly increasing function $\tau:[0,1]\to[0,1]$ such that $\tau(0)=0$ and $\tau(1)=1$, then 
\begin{align*}
    \l(\rho) = \int_0^1 F(\dot\gamma(\tau(t)))\tau'(t)\,dt = \int_0^1 F(\dot\gamma(s))\,ds = \l(\gamma).
\end{align*}
Moreover, in the special case of $F\big|_\S\equiv 1$, the $F$-length of a curve $\gamma\in \ac([0,1];\Rn)$ coincides with the classical length of $\gamma$.

The $F$-geodesic problem associated to $(x,y)\in \Rn\times\Rn$ is the following:
\begin{align}\label{GP}\tag{GP}
    \text{minimize} \quad \l(\gamma)\quad\text{over}\quad \gamma\in \ac([0,1];\Rn)\text{ s.t. }\gamma(0) = x \text{ and }\gamma(1)=y.
\end{align}
We call the solutions (if any) of the problem \eqref{GP} \emph{$F$-geodesics from $x$ to $y$}, and we collect all such solutions into the set $\Geo(x,y)$. We say that $\gamma\in \ac([0,1];\Rn)$ is a \emph{$F$-geodesic} and write $\gamma\in \Geo$ if $\gamma$ is an $F$-geodesic from $\gamma(0)$ to $\gamma(1)$.

\subsection{Main results}

Let $F\in \Integrand$ be a fixed integrand. For each $v\in \S$ consider the half-space 
\begin{align*}
    H_v:=\{x\in \Rn: \langle x,v\rangle \leq F(v)\}.
\end{align*}
The \emph{$F$-crystal} is the convex set
    \begin{align*}
        K_F := \bigcap_{v\in \S} H_v.
    \end{align*}
It turns out that, under the standing assumptions of $F$, $K_F$ is a compact set containing $0$ in its interior. This set is also known in the literature as Wulff's set and it enjoys the following anisotropic isoperimetric property. Let $\Omega\subseteq \Rn$ be a set of finite perimeter (see e.g. \cite[Chapter 5]{EvGar} for the definition and main properties of these sets). The \emph{$F$-perimeter of $\Omega$} is defined as
\begin{align*}
    \text{Per}_F(\partial\Omega):= \int_{\partial \Omega} F(\nu_\Omega(x))\,d\Hp(x),
\end{align*}
where $\partial \Omega$ is the (reduced) boundary of $\Omega$, $\nu_\Omega(x)$ is the outer unit normal to $\partial \Omega$ at $x$ and $\Hp$ denotes the $(n-1)$-dimensional Hausdorff measure. Denoting by $|\cdot|$ the Lebesgue measure in $\Rn$  and setting $\omega_n:=|\{x\in \Rn : |x|\leq 1\}|$, it turns out that 
\begin{align}\label{isop}
    \frac{\text{Per}_F(\partial\Omega)}{|\Omega|^{\frac{n-1}{n}}} \geq \frac{\text{Per}_F(\partial K_F)}{|K_F|^\frac{n-1}{n}} = n \omega_n^\frac{1}{n}
\end{align}
holds for every admissible $\Omega\subseteq \Rn$. Moreover, equality in \eqref{isop} holds if and only if, up to sets of measure zero, $\Omega$ is homothetic to $K_F$ (see e.g. \cite{esposito2005quantitative,taylor1974existence,taylor1975unique,taylor1978crystalline,wulff1901frage}). 

For any subset $\Omega\subseteq \Rn$, the \emph{polar body of $\Omega$} is defined as
\begin{align*}
    \Star \Omega:=\left\{z\in \Rn : \langle z,x \rangle \leq 1 \,\forall x\in \Omega\right\}.
\end{align*}
Since $\Star\Omega$ is the result of the intersections of the half-spaces $\left\{z\in \Rn : \langle z,x\rangle \leq 1\right\}$ for any $x\in \Omega$, then $\Star \Omega$ is a convex subset containing the origin. Moreover, if $\Omega$ is a convex subset containing $0$, then $\Star\Star \Omega = \Omega$ (see Lemma \ref{Lem_starstar}).

If $\Omega\subseteq \Rn$ and $\alpha\geq 0$, the set $\alpha \Omega$ is the set containing all of the elements $\alpha x$ for each $x\in \Omega$.
We define the function $\norm{\cdot}_F:\Rn\to \Rpo$ as 
\begin{align*}
    \norm{z}_F := \min \{\lambda \geq 0 : x\in \lambda \Star K_F\}.
\end{align*}
Notice that $\norm{\cdot}_F$ may fail to be a norm only because, in general, $\norm{-z}_F\neq \norm{z}_F$. For any $z\in \Rn\backslash\{0\}$, $z/\norm{z}_F$ belongs to $\partial (\Star K_F)$. Therefore,
\begin{align*}
    \left\langle z,x\right\rangle \leq \norm{z}_F \,\, \forall x\in K_F\quad \text{and}\quad \exists \overline{x}\in K_F \,\,:\,\, \left\langle z,\overline{x}\right\rangle = \norm{z}_F.
\end{align*}

Given $F\in \Integrand$, we define the \emph{convex envelope of $F$} as 1-homogeneous positive the function $\d(F):\Rn \to \Rpo$
\begin{align*}
    \d(F)(x):= \sup_{v\in \S}\left\{\inf_{\substack{w\in \S\\ \langle x,w\rangle >0 }} \left\{F(w)\frac{\langle v, x\rangle}{\langle v,w \rangle}\right\}\right\}.
\end{align*}
Observe that $\d(F)\leq F$ for every $F\in \Integrand$.
The \emph{contract set of $F$} is 
\begin{align*}
    \Cont(F):=\left\{x\in \Rn : F(x) = \mathcal{D}(F)(x)\right\}
\end{align*}
As both $F$ and $\d(F)$ are 1-homogeneous, if $x\in \Cont(F)$ then $\lambda x\in \Cont(F)$ for every $\lambda\geq 0$. In particular, $0\in \Cont(F)$ for every integrand $F$. We say that $F\in \Integrand$ is \emph{convex} if $\Cont(F)=\Rn$.

Let $K\subseteq\Rn$ be a compact subset and fix $v\in \S$. The \emph{supporting hyperplane of $K$ associated with $v$} is the (affine) hyperplane $\pi_v$ such that 
\begin{align*}
    \pi_v = \left\{z\in \Rn : \langle z,v\rangle = \alpha\right\}\quad \text{and}\quad \max_{y\in K}\left\{\langle y,v\rangle\right\} = \alpha.
\end{align*}

The first main result of this work is the following characterization of the $F$-geodesics.

\begin{theorem}\label{th_MainResult}
    Let $F\in\Integrand$ be an integrand and $\gamma\in \ac([0,1];\Rn)$ be regular. Define $\hat{v}:=\gamma(1)-\gamma(0)$ and let $\overline{x}\in K_F$ such that $\norm{\hat{v}}_F = \langle \hat{v}, \overline{x}\rangle$. The following are equivalent:
    \begin{enumerate}[label = (\arabic*)]
        \item $\gamma$ is a $F$-geodesic;
        \item $\l(\gamma)=\norm{\hat{v}}_F$;
        \item for almost every $t\in (0,1),$ $\dot\gamma(t)\in \Cont(F)$ and $\overline{x} + (\dot\gamma(t))^\perp$ is a supporting hyperplane for $K_F$ at $\overline{x}$.
    \end{enumerate}
\end{theorem}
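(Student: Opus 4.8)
\emph{The plan.} The whole statement will follow from one sharp pointwise inequality together with a careful reading of its equality case, so I would first fix the right dual objects. The preliminary fact I would use (recording it as a lemma if it is not already available) is the duality identity
\begin{align*}
    \d(F)(z) = \norm{z}_F = \max_{x\in K_F}\langle z,x\rangle \qquad \text{for all } z\in \Rn,
\end{align*}
which identifies the convex envelope of the integrand $F$ with the support function of its crystal $K_F$. One inclusion is immediate, since $\norm{\cdot}_F$ is a support function, hence convex and $1$-homogeneous, and is a minorant of $F$ (every $x\in K_F$ satisfies $\langle z,x\rangle\leq F(z)$ by $1$-homogeneity); being dominated by the largest convex $1$-homogeneous minorant $\d(F)$ gives $\norm{\cdot}_F\leq \d(F)$. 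For the reverse inclusion I would write the sublinear function $\d(F)$ as the support function of $C:=\{x : \langle x,z\rangle\leq \d(F)(z)\ \forall z\}$ and note $C\subseteq K_F$ because $\d(F)\leq F$, whence $\d(F)\leq \norm{\cdot}_F$.

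\emph{Lower bound.} Since $\overline{x}\in K_F=\bigcap_v H_v$, the $1$-homogeneity of $F$ yields the pointwise inequality $\langle \overline x,w\rangle\leq F(w)$ for every $w\in\Rn$. Applying this to $w=\dot\rho(t)$ and integrating, the fundamental theorem of calculus for absolutely continuous curves gives, for every admissible competitor $\rho$ from $\gamma(0)$ to $\gamma(1)$,
\begin{align*}
    \l(\rho)=\int_0^1 F(\dot\rho(t))\,dt\ \geq\ \int_0^1\langle \overline x,\dot\rho(t)\rangle\,dt=\langle \overline x,\rho(1)-\rho(0)\rangle=\langle \overline x,\hat v\rangle=\norm{\hat v}_F.
\end{align*}
Thus $\norm{\hat v}_F$ is a lower bound for the $F$-length of every such path, and equality holds if and only if $F(\dot\rho(t))=\langle \overline x,\dot\rho(t)\rangle$ for almost every $t$, because the integrand $F(\dot\rho)-\langle \overline x,\dot\rho\rangle$ is nonnegative.

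\emph{The equivalence $(2)\Leftrightarrow(3)$.} Using the duality identity I would translate condition (3) into the single pointwise equation $F(\dot\gamma(t))=\langle \overline x,\dot\gamma(t)\rangle$ a.e. Indeed, $\dot\gamma(t)\in\Cont(F)$ means $F(\dot\gamma(t))=\d(F)(\dot\gamma(t))=\norm{\dot\gamma(t)}_F$, while (as $\gamma$ is regular, $\dot\gamma(t)\neq0$ a.e.) the hyperplane $\overline x+(\dot\gamma(t))^\perp$ supports $K_F$ at $\overline x$ exactly when $\langle \overline x,\dot\gamma(t)\rangle=\max_{z\in K_F}\langle z,\dot\gamma(t)\rangle=\norm{\dot\gamma(t)}_F$; chaining these two equalities gives $F(\dot\gamma(t))=\langle\overline x,\dot\gamma(t)\rangle$, and the converse follows from the forced equalities in $\langle\overline x,\dot\gamma(t)\rangle\leq\norm{\dot\gamma(t)}_F\leq F(\dot\gamma(t))$. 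Comparing with the equality case of the lower bound applied to $\gamma$ then yields $(2)\Leftrightarrow(3)$ at once.

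\emph{The equivalence $(1)\Leftrightarrow(2)$ and the main difficulty.} The implication $(2)\Rightarrow(1)$ is immediate: if $\l(\gamma)=\norm{\hat v}_F$, then $\gamma$ realizes the minimal possible $F$-length among admissible paths and is a geodesic. For $(1)\Rightarrow(2)$ it suffices to exhibit a single admissible competitor of $F$-length $\norm{\hat v}_F$, since then a geodesic $\gamma$ satisfies $\l(\gamma)\leq\norm{\hat v}_F$ and the lower bound forces equality. Constructing this competitor is where I expect the real work to lie. As $\norm{\hat v}_F=\d(F)(\hat v)$ is the convex envelope, and $F$ is lower semicontinuous and coercive ($F(w)\geq(\min_{\S}F)\,|w|$), a Carathéodory-type argument represents $\hat v=\sum_{i=1}^k w_i$ with $k\leq n+1$ and $\sum_{i}F(w_i)=\norm{\hat v}_F$, the infimum being attained by compactness. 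The chain of termwise inequalities
\begin{align*}
    \sum_i F(w_i)\ \geq\ \sum_i\norm{w_i}_F\ \geq\ \sum_i\langle \overline x,w_i\rangle=\langle\overline x,\hat v\rangle=\norm{\hat v}_F=\sum_i F(w_i)
\end{align*}
must then collapse to equalities, giving $F(w_i)=\norm{w_i}_F=\langle\overline x,w_i\rangle$ for each $i$: every $w_i$ lies in $\Cont(F)$ and makes $\overline x+w_i^\perp$ a supporting hyperplane of $K_F$ at $\overline x$. Concatenating the segments in directions $w_1,\dots,w_k$ (discarding any $w_i=0$, harmless since $\hat v\neq0$) and reparametrizing onto $[0,1]$ yields a regular path $\rho$ satisfying (3), whence $\l(\rho)=\norm{\hat v}_F$ by the already proved $(3)\Rightarrow(2)$. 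The crux is thus this attained, finitely supported decomposition of $\hat v$ into directions lying simultaneously in $\Cont(F)$ and in the normal cone of $K_F$ at $\overline x$, resting on the convex-envelope/support-function duality recorded at the outset.
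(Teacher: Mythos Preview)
Your proof is correct and, in one respect, more complete than the paper's. For the equivalence $(2)\Leftrightarrow(3)$, your approach and the paper's are essentially the same argument in different clothing: the paper writes the two-step chain
\[
\l(\gamma)\ \geq\ \int_0^1\d(F)(\dot\gamma)\,dt\ \geq\ \d(F)(\hat v)
\]
(using $F\geq\d(F)$ and then Jensen for the convex function $\d(F)$) and reads off the two equality cases separately, while you collapse both steps into the single linear lower bound $F(w)\geq\langle\overline x,w\rangle$ and analyze its equality case via the sandwich $\langle\overline x,\cdot\rangle\leq\norm{\cdot}_F=\d(F)\leq F$. These are equivalent, since the equality case of Jensen for a support function is precisely agreement with a supporting linear functional.

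Where your proof genuinely adds content is the implication $(1)\Rightarrow(2)$. The paper's proof of this theorem establishes only the lower bound $\l(\gamma)\geq\norm{\hat v}_F$ and its equality characterization, which yields $(2)\Leftrightarrow(3)$ and $(2)\Rightarrow(1)$, but never exhibits a competitor with $F$-length equal to $\norm{\hat v}_F$; without that, a minimizer $\gamma$ could in principle satisfy $\l(\gamma)>\norm{\hat v}_F$. In the paper this gap is effectively closed only later, in the explicit constructions of Theorem~\ref{Cor_Main}. Your Carath\'eodory argument---writing $\hat v=\sum_i w_i$ with $\sum_i F(w_i)=\d(F)(\hat v)$ via conical Carath\'eodory in $\R^{n+1}$, attainment coming from coercivity $F\geq(\min_{\S}F)|\cdot|$ and lower semicontinuity---closes the loop inside the proof itself and shows directly that each $w_i$ lies in $\Cont(F)$ with $\overline x+w_i^\perp$ supporting $K_F$. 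The paper's route is shorter; yours is logically self-contained.
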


For any $r>0$ and $x\in \Rn$, we define the $F$-geodesic (closed) ball of center $x$ and radius $r$ as the set
\begin{align*}
    \left\{\gamma(1): \gamma \in \Geo,\,\gamma(0)=x, \l(\gamma)\leq r\right\}.
\end{align*}
Then, as an immediate consequence of Theorem \ref{th_MainResult}, we deduce the following relation between the $F$-crystal and the $F$-geodesic balls.

\begin{corollary}
    Let $F\in \Integrand$ be an integrand. Then the $F$-crystal $K_F$ and the $F$-geodesic unitary ball centered at the origin are one the polar body of the other.
\end{corollary}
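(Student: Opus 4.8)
The plan is to prove the single identity $B=\Star K_F$, where $B:=\{\gamma(1):\gamma\in\Geo,\ \gamma(0)=0,\ \l(\gamma)\le 1\}$ denotes the $F$-geodesic unitary ball centered at the origin; the corollary then follows at once. Indeed, $K_F$ is convex and contains $0$ in its interior, so Lemma~\ref{Lem_starstar} yields $\Star\Star K_F=K_F$, whence $\Star B=\Star\Star K_F=K_F$. Together with $B=\Star K_F$ this is exactly the assertion that $B$ and $K_F$ are one the polar body of the other.

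I would first translate $\Star K_F$ into a sublevel set of the gauge $\norm{\cdot}_F$. By its very definition $\norm{z}_F=\min\{\lambda\ge 0:z\in\lambda\,\Star K_F\}$ is the Minkowski functional of the closed convex set $\Star K_F$, which contains the origin. If $z\in\Star K_F$ then $z\in 1\cdot\Star K_F$, so $\norm{z}_F\le 1$; conversely, if $\norm{z}_F\le 1$ then $z\in\norm{z}_F\,\Star K_F\subseteq\Star K_F$, the inclusion being a consequence of the convexity of $\Star K_F$ together with $0\in\Star K_F$ and $\norm{z}_F\le 1$. Hence $\Star K_F=\{z\in\Rn:\norm{z}_F\le 1\}$, and it remains to show $B=\{z\in\Rn:\norm{z}_F\le 1\}$.

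The engine for both inclusions is the fact that, for every $z\in\Rn$, the minimum $F$-length in \eqref{GP} from $0$ to $z$ equals $\norm{z}_F$ and is attained by a regular geodesic $\gamma_0$. Granting this, fix $z$ with $\norm{z}_F\le 1$: the realizing geodesic $\gamma_0$ satisfies $\gamma_0(0)=0$, $\gamma_0(1)=z$ and, by the implication $(1)\Rightarrow(2)$ of Theorem~\ref{th_MainResult}, $\l(\gamma_0)=\norm{z}_F\le 1$, so $z\in B$. Conversely, if $z\in B$ there is some $\gamma\in\Geo$ with $\gamma(0)=0$, $\gamma(1)=z$ and $\l(\gamma)\le 1$; being an element of $\Geo(0,z)$, this $\gamma$ is a minimizer of \eqref{GP}, so $\norm{z}_F=\l(\gamma)\le 1$ (note that no regularity of $\gamma$ is needed here, only that it attains the minimum, which is pinned to $\norm{z}_F$ by $\gamma_0$). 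This gives $B=\{z\in\Rn:\norm{z}_F\le 1\}=\Star K_F$.

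The one nontrivial ingredient is therefore the fact invoked above, namely the existence of a regular geodesic from $0$ to $z$ of length exactly $\norm{z}_F$; once such a curve is exhibited, its being a minimizer pins the optimal value of \eqref{GP} to $\norm{z}_F$, and everything else is formal. This is the point where the possible non-convexity of $F$ is felt: when $z\in\Cont(F)$ the straight segment $t\mapsto tz$ already satisfies condition $(3)$ of Theorem~\ref{th_MainResult}, since its constant velocity lies in $\Cont(F)$ and $\overline{x}+z^\perp$ supports $K_F$ at the contact point $\overline{x}$ with $\langle z,\overline{x}\rangle=\norm{z}_F=\max_{y\in K_F}\langle z,y\rangle$; but when $z\notin\Cont(F)$ the minimizer must instead be assembled from velocities lying in $\Cont(F)$ that share this supporting hyperplane. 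I expect this construction, supplied by the analysis underlying Theorem~\ref{th_MainResult}, to be the main obstacle, while every remaining step reduces to unwinding the definitions of $K_F$, $\norm{\cdot}_F$ and the polar body.
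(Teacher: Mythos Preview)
Your plan is correct and matches the paper's intended argument, which the paper itself leaves implicit by calling the Corollary ``an immediate consequence of Theorem~\ref{th_MainResult}.'' In particular, the identification $\Star K_F=\{z:\norm{z}_F\le 1\}$ that you derive by hand is exactly the content of Lemma~\ref{O_F=PK_F} together with Corollary~\ref{Cor:Tan}, and the equivalence $(1)\Leftrightarrow(2)$ of Theorem~\ref{th_MainResult} is indeed the mechanism that converts the sublevel set into the geodesic ball.

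One small correction of attribution: the construction you flag as the only nontrivial step --- producing, for each $z$, a regular curve from $0$ to $z$ with $\l(\gamma)=\norm{z}_F$ when $z\notin\Cont(F)$ --- is \emph{not} supplied by the analysis underlying Theorem~\ref{th_MainResult}. That theorem only gives the lower bound $\l(\gamma)\ge\norm{\hat v}_F$ and the characterization of equality. The existence of a curve attaining $\norm{z}_F$ is precisely what the proof of Theorem~\ref{Cor_Main} establishes: using Krein--Milman (Lemma~\ref{convExtr}) and Lemma~\ref{tangent=extremal}, one writes $z/\norm{z}_F$ as a convex combination of extremal points $\Tilde u,\Tilde w\in\Extr(O_F)\subseteq\Ort(\partial K_F)\subseteq\Cont(F)$ (the last inclusion by Lemma~\ref{interiorConvexHull}\ref{keyLem_iv}), and the concatenated broken line in those directions has $F$-length exactly $\norm{z}_F$. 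So the Corollary really rests on both Theorem~\ref{th_MainResult} and (the existence part of) Theorem~\ref{Cor_Main}; the paper's phrasing is slightly loose on this point, and your instinct that something extra is needed is well founded --- you simply need to look to Section~4 rather than Section~3 for it.
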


Using Theorem \ref{th_MainResult} together with some further remarks, we prove that if the integrand $F$ is convex, then line segments are always $F$-geodesics (Corollary \ref{cor:FconvSegments}). However, as demonstrated in Example \ref{ex:Fconv}, it is possible that, even with a convex integrand $F$, line segments are not the sole $F$-geodesics.

Let $K\subseteq \Rn$ be any convex set. For each $y\in \partial K$, we define the \emph{cone of normal directions of $\partial K$ at $y$} as 
\begin{align*}
    \mathcal{N}_y\partial K := \left\{v\in \Rn : \langle N, x-y\rangle \leq 0 \quad \forall x\in K\right\}.
\end{align*}
It turns out that $\mathcal{N}_y\partial K\cap \S$ is a singleton at $\Hp$-almost every point $y\in \partial K$. For any such points, we denote by $N_{\partial K}(y)$ the unique element of $\mathcal{N}_y\partial K\cap \S$ and we call it \emph{(outer) unit normal to $\partial K$ at $y$}. Whenever $N_{\partial K}(y)$ is defined, 
\begin{align*}
    (N_{\partial K}(y))^\perp := \left\{z\in \Rn: \langle z, N_{\partial K}(y)\rangle = 0\right\}
\end{align*}
is the tangent space of $\partial K$ at $y$. The \emph{affine tangent space of $\partial K$ at $y$} is given by $y + (N_{\partial K}(y))^\perp$. By convexity of $K$, it is easy to see that $y+(N_{\partial K}(y))^\perp = \pi_{N_{\partial K}(y)}$ at every $y\in \partial K$ such that $N_{\partial K}(y)$ is defined. The set of \emph{all orthogonal directions to $\partial K$} is 
\begin{align*}
    \Ort(\partial K):= \left\{v\in \Rn\backslash\{0\}:  \exists y\in \partial K \text{ s.t. } \exists N_{\partial K}(y)=\frac{v}{|v|}\right\},
\end{align*}

Let $F\in \Integrand$ and fix two distinct points $x,y\in \Rn$. 
We say that two curves $\gamma,\rho\in\Geo(x,y)$ are equivalent, and we write $\gamma\sim\rho$ if there exists an increasing reparametrization $\tau:[0,1]\to[0,1]$ of $\rho$ such that $\gamma = \rho\circ \tau$. With ${\Geo_{/\sim}}(x,y)$ we indicate the set of equivalence classes of $\Geo(x,y)$ with respect to the equivalence relation $\sim$. The second main result is the following. 

\begin{theorem}\label{Cor_Main}
    Let $F\in \Integrand$ and $x,y\in \Rn$ such that $x\neq y$. Then the $F$-geodesic problem \eqref{GP} admits a solution. More precisely:
    \begin{enumerate}[label = (\arabic*)]
        \item \label{cor_i} if $y-x\in \Ort(\partial K_F)$, then ${\Geo_{/\sim}}(x,y)$  contains one and only element, and representative of it is the line segment $t\mapsto (1-t)x + ty$.
        \item \label{cor_ii} if $y-x\not \in \Ort(\partial K_F)$, then ${\Geo_{/\sim}}(x,y)$ contains infinitely many elements.
    \end{enumerate}
\end{theorem}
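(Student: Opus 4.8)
The plan is to read Theorem~\ref{th_MainResult} as a description of admissible velocities and to convert the whole statement into convex geometry of $K_F$. First I would record that $\norm{\cdot}_F$ is the support function of $K_F$: since $\norm{\cdot}_F$ is the gauge of $\Star K_F$ and, by Lemma~\ref{Lem_starstar}, the polar of $\Star K_F$ is $K_F$, one gets $\norm{z}_F=\max_{w\in K_F}\langle z,w\rangle$. Writing $\hat v=y-x\neq 0$ and letting $\mathcal{F}_{\hat v}:=\{w\in K_F:\langle\hat v,w\rangle=\norm{\hat v}_F\}$ be the face of $K_F$ exposed by $\hat v$, the clause ``$\overline x+(\dot\gamma(t))^\perp$ is a supporting hyperplane for $K_F$ at $\overline x$'' is exactly $\dot\gamma(t)\in\mathcal{N}_{\overline x}\partial K_F$. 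Thus Theorem~\ref{th_MainResult} reads: a regular $\gamma$ from $x$ to $y$ is an $F$-geodesic if and only if there is a common supporting point $\overline x\in\mathcal{F}_{\hat v}$ with $\dot\gamma(t)\in\Cont(F)\cap\mathcal{N}_{\overline x}\partial K_F$ for a.e.\ $t$, and any such curve automatically satisfies $\l(\gamma)=\norm{\hat v}_F$. This reduces both existence and the enumeration to the shape of the cone $\Cont(F)\cap\mathcal{N}_{\overline x}\partial K_F$.

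The one convex-geometric input I would isolate is that each normal cone is generated by its contact directions: for every $\overline x\in\partial K_F$,
\[
\mathcal{N}_{\overline x}\partial K_F=\overline{\mathrm{cone}}\bigl(\Cont(F)\cap\mathcal{N}_{\overline x}\partial K_F\bigr).
\]
This should follow from $K_F=\bigcap_v H_v$: the active constraints $\{v\in\S:\langle\overline x,v\rangle=F(v)\}$ generate the normal cone, and each active $v$ satisfies $F(v)=\langle\overline x,v\rangle\le\d(F)(v)\le F(v)$, hence $v\in\Cont(F)$; conversely any $v\in\Cont(F)\cap\mathcal{N}_{\overline x}\partial K_F$ is active. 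I expect this to be the main obstacle, since establishing ``normal cone $=$ conical hull of active normals'' under mere lower semicontinuity of $F$, with infinitely many constraints and $K_F$ possibly not a polytope, requires care at the relative boundary of faces.

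For part~\ref{cor_i}, assume $\hat v\in\Ort(\partial K_F)$, so some $\overline x_0$ has $\mathcal{N}_{\overline x_0}\partial K_F=\Rpo\hat v$. The structural identity then forces $\hat v\in\Cont(F)$ (the ray must be generated by a contact direction, necessarily $\hat v$), so $t\mapsto(1-t)x+ty$ meets the criterion and is an $F$-geodesic. For uniqueness I would show $\hat v$ is an extreme ray of $\mathcal{N}_{\overline x}\partial K_F$ for \emph{every} $\overline x\in\mathcal{F}_{\hat v}$: if $\hat v=\tfrac12(w_1+w_2)$ with $w_i\in\mathcal{N}_{\overline x}\partial K_F$, then evaluating $\langle\overline x_0,\cdot\rangle$ and $\langle\overline x,\cdot\rangle$ on $\hat v,w_1,w_2$ and using that both lie in $\mathcal{F}_{\hat v}$ forces $w_1,w_2\in\mathcal{N}_{\overline x_0}\partial K_F=\Rpo\hat v$. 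Now any geodesic has $\dot\gamma(t)\in\mathcal{N}_{\overline x}\partial K_F$ a.e.\ with $\int_0^1\dot\gamma\,dt=\hat v$; separating $\hat v$ from the rest of the cone by a supporting functional shows $\dot\gamma(t)\in\Rpo\hat v$ a.e., so $\gamma$ is an increasing reparametrization of the segment and $\Geo_{/\sim}(x,y)$ is a singleton.

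For part~\ref{cor_ii}, assume $\hat v\notin\Ort(\partial K_F)$. Choosing $\overline x$ in the relative interior of $\mathcal{F}_{\hat v}$, the cone $C:=\mathcal{N}_{\overline x}\partial K_F$ has $\dim C\ge 2$ and $\hat v\in\mathrm{relint}\,C$; by the structural identity $C=\overline{\mathrm{cone}}(\Cont(F)\cap C)$, so I can write $\hat v=\sum_i\alpha_i w_i$ with $\alpha_i>0$, $w_i\in\Cont(F)\cap C$, and not all $w_i$ parallel to $\hat v$. The polygonal path with successive legs $\alpha_i w_i$ shares the single supporting point $\overline x$, hence is an $F$-geodesic, and it genuinely leaves the segment $[x,y]$. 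Replacing one round of these legs by $k$ scaled rounds of size $1/k$ yields, for each $k$, an $F$-geodesic whose maximal deviation from $[x,y]$ is of order $1/k$; these have pairwise distinct traces, exhibiting infinitely many classes in $\Geo_{/\sim}(x,y)$. In both parts existence is immediate from the explicit constructions, so once the structural identity and the extreme-ray dichotomy are in place the remaining arguments are routine; the convex-geometry of $\mathcal{N}_{\overline x}\partial K_F$ is where the real work lies.
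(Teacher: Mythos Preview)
Your approach is correct and genuinely dual to the paper's. Where you organize everything around normal cones of $K_F$ and the ``structural identity'' $\mathcal{N}_{\overline x}\partial K_F=\overline{\mathrm{cone}}(\Cont(F)\cap\mathcal{N}_{\overline x}\partial K_F)$, the paper passes to the polar body $O_F=\Star K_F$ and proves instead (Lemma~\ref{tangent=extremal}) that for $v\in\partial O_F$ one has $v\in\Ort(\partial K_F)$ iff $v\in\Extr(O_F)$; combined with $\Ort(\partial K_F)\subseteq\Cont(F)$ (Lemma~\ref{interiorConvexHull}\ref{keyLem_iv}) and Krein--Milman on $O_F$, this yields your structural identity at once: write $v_0/\d(F)(v_0)\in\partial O_F$ as a convex combination of extreme points and a one-line averaging against $\overline x$ forces each of them into $\Cont(F)\cap\mathcal{N}_{\overline x}\partial K_F$. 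This cleanly sidesteps the ``active constraints generate the normal cone'' step you correctly flagged as delicate under mere lower semicontinuity.

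For part~\ref{cor_i} you work harder than needed: Theorem~\ref{th_MainResult} holds for \emph{any} $\overline x$ realizing $\norm{\hat v}_F$, so the paper simply takes $\overline x=\overline x_0$ where the normal cone is the ray $\Rpo\hat v$, and condition~(3) forces $\dot\gamma(t)\in\Rpo\hat v$ outright; your extreme-ray argument across all of $\mathcal F_{\hat v}$ is valid but unnecessary. For part~\ref{cor_ii} the paper, having written $\hat v=(1-\lambda)\tilde u+\lambda\tilde w$ with $\tilde u,\tilde w\in\Extr(O_F)=\Ort(\partial K_F)\cap\partial O_F\subseteq\Cont(F)$, builds the one-parameter family $\sigma^\tau=\gamma_{(1-\tau)u}\diamond\gamma_w\diamond\gamma_{\tau u}$ instead of your $k$-round rescalings; both constructions give infinitely many classes. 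What the polar-body route buys is that the hard convex geometry collapses to two short lemmas plus Krein--Milman; what your route buys is that everything stays on the $K_F$ side and Theorem~\ref{th_MainResult} is invoked in its most natural normal-cone form.
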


\section{Preliminaries}

\begin{definition}[Convex hull]
    Let $\Omega\subseteq \Rn$ be a subset. The convex hull of $\Omega$ is the set
    \begin{align*}
        [\Omega]:=\left\{(1-\lambda)x + \lambda y: x,y\in \Omega\right\}.
    \end{align*}
\end{definition}

Notice that the convex hull of a set $\Omega$ is the “smallest” convex set containing $\Omega$, in the sense that if $U\supseteq \Omega$ is convex, then $U\supseteq [\Omega]$.

\begin{lemma}\label{Lem_starstar}
    For every $\Omega\subseteq \Rn$, $\Star\Star\Omega = [\Omega \cup\{0\}]$. 
\end{lemma}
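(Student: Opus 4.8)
The statement is the classical bipolar theorem, so the plan is to prove the two inclusions $[\Omega\cup\{0\}]\subseteq\Star\Star\Omega$ and $\Star\Star\Omega\subseteq[\Omega\cup\{0\}]$ separately. Throughout I read $[\,\cdot\,]$ as the smallest \emph{closed} convex set containing its argument, as suggested by the remark following the definition of the convex hull; this extra closedness is automatic in the cases of interest (e.g.\ $\Omega$ compact, where the convex hull is already compact), but is what makes the separation step below go through in general.

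For the first inclusion I would argue directly from the definitions. If $x\in\Omega$, then $\langle z,x\rangle\leq 1$ for every $z\in\Star\Omega$, which is exactly the defining condition for $x\in\Star\Star\Omega$; and $0\in\Star\Star\Omega$ since $\langle z,0\rangle=0\leq 1$ for all $z$. Hence $\Omega\cup\{0\}\subseteq\Star\Star\Omega$. Because $\Star\Star\Omega$ is an intersection of closed half-spaces, it is itself closed and convex, so it must contain the smallest closed convex set containing $\Omega\cup\{0\}$, namely $[\Omega\cup\{0\}]$.

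For the reverse inclusion I would argue by contraposition: assuming $x_0\notin C:=[\Omega\cup\{0\}]$, I produce a point $z\in\Star\Omega$ with $\langle z,x_0\rangle>1$, which shows $x_0\notin\Star\Star\Omega$. Since $C$ is nonempty, closed and convex and $x_0\notin C$, the Hahn--Banach separation theorem yields $z\in\Rn$ and $\alpha\in\R$ with $\langle z,c\rangle\leq\alpha<\langle z,x_0\rangle$ for every $c\in C$. The key point is that $0\in C$ forces $\alpha\geq\langle z,0\rangle=0$. If $\alpha>0$, then rescaling $z\mapsto z/\alpha$ gives $\langle z,x\rangle\leq 1$ for all $x\in\Omega\subseteq C$, i.e.\ $z\in\Star\Omega$, while $\langle z,x_0\rangle>1$. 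If instead $\alpha=0$, then already $z\in\Star\Omega$ (as $\langle z,x\rangle\leq 0\leq 1$ for $x\in\Omega$) and $\langle z,x_0\rangle>0$; scaling $z$ by a sufficiently large positive constant makes this last inner product exceed $1$ without spoiling $z\in\Star\Omega$, since the defining inequalities are homogeneous and $\leq 0$. In either case $x_0\notin\Star\Star\Omega$, which gives $\Star\Star\Omega\subseteq C$ and closes the argument.

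The only genuine obstacle is invoking the separation theorem correctly: one must strictly separate the point $x_0$ from the \emph{closed} convex set $C$, and then exploit the membership $0\in C$ to normalize the separating functional into an element of $\Star\Omega$. The borderline case $\alpha=0$ is the easy-to-overlook subtlety, and it is handled purely by the homogeneity of the polar constraints.
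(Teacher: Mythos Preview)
Your proof is correct and follows essentially the same route as the paper: the easy inclusion $\Omega\cup\{0\}\subseteq\Star\Star\Omega$ plus convexity for one direction, and hyperplane separation together with the normalization afforded by $0\in[\Omega\cup\{0\}]$ for the other. You are in fact more careful than the paper on two points it glosses over: you make explicit that the separation step needs $[\Omega\cup\{0\}]$ to be \emph{closed} (the paper's definition of $[\cdot]$ does not include closure, though in its applications $\Omega$ is compact so nothing is lost), and you treat the borderline case $\alpha=0$, which the paper sidesteps by writing the separation with a strict inequality on the convex-set side and thereby getting $\alpha>0$ directly.
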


\begin{proof}
    First we prove that $[K\cup \{0\}]\subseteq \Star\Star \Omega$. By virtue of the above remark, it is enough to show that $K\subseteq \Star\Star \Omega$. Fix $x\in \Omega$. By definition of polar body then
    \begin{align*}
        \langle x,z\rangle\leq 1\quad \forall z\in \Star\Omega.
    \end{align*}
    Therefore $x\in \Star\Star\Omega$.

    For the converse inclusion, suppose the existence of a point $x\in (\Star\Star\Omega)\backslash [\Omega\cup\{0\}]$. Then there exists an affine hyperplane $\pi=\{x\in \Rn : \langle x,v\rangle = \alpha\}$, for some $v\in \S$ and $\alpha\geq0$ separating the sets $\{x\}$ and $[\Omega \cup \{0\}]$, i.e.
    \begin{align}\label{starstar}
        \langle y,v\rangle < \alpha \quad \forall y\in [\Omega\cup \{0\}]\quad \text{and} \quad \langle x,v \rangle > \alpha.
    \end{align}
    Since $0\in[\Omega\cup \{0\}]$, then $\alpha>0$. Moreover, the first of \eqref{starstar} implies $v/\alpha\in \Star \Omega$. This is, combined with the second of \eqref{starstar}, contradicts the fact that $x\in \Star\Star\Omega$.

\end{proof}

Define the operators $\w,\i,\a,\d:\Integrand \to \Integrand$ as 
\begin{gather*}
    \w(F)(v):= \inf_{\substack{w\in \S\\\langle v,w\rangle>0}}\left\{\frac{F(w)}{\langle v, w\rangle}\right\},
    \quad
    \i(F)(v):=\frac{1}{F(v)},
    \\
    \a(F)(v):= \sup_{w\in\S}\left\{F(w)\langle v,w \rangle\right\},
    \quad
    \d(F):=\a\circ \w(F)
\end{gather*}
for all $F\in \Integrand$ and $v\in \S$, and extended by 1-homogeneity in $\Rn$. It is easy to see that the inequalities $\w(F)\leq F$ and $F\leq \a(F)$ hold true for every integrand $F$. Moreover, for any $v\in \S$ and $F\in \Integrand$,
\begin{align*}
    \d(F)(v) :=  \sup_{u\in \S}\left\{\inf_{\substack{w\in \S\\\langle v,w\rangle>0}}\left\{F(w)\frac{\langle u,v\rangle}{\langle u,w\rangle}\right\}\right\} \leq \sup_{u\in \S}\left\{F(v)\right\} = F(v).
\end{align*}
Hence 
\begin{align}\label{operatorInequalities}
    \d(F)\leq F\quad \forall F\in \Integrand.
\end{align}

If $F\in \Integrand$, we call \emph{polar graph of} $F$ and \emph{polar hypograph of} $F$ the sets
\begin{gather*}
    \graph(F):=\{F(v)v\in \Rn: v\in \S\} = \{x\in \Rn : |x|= F(x/|x|)\},\\ \hypo(F):=\{\lambda F(v)v\in \Rn: v\in \S,\,0\leq\lambda\leq 1\} = \{x\in \Rn: |x|\leq F(x/|x|)\}
\end{gather*}
respectively.
Notice that for every integrand $F$, the origin of $\Rn$ belongs to the interior of $\hypo(F)$ and $\graph(F)\subseteq\partial(\hypo(F))$.
Moreover, $K_F= \hypo(\w(F))$. Indeed, both sets contain the origin $0\in \Rn$ and, if $x\in \hypo(\w(F))\backslash\{0\}$, by definition of $\w(F)$ we have 
    \begin{align*}
        |x|\leq \inf_{\substack{w\in\S\\ \langle x/|x|,w\rangle>0}} \left\{\frac{F(w)}{\langle x/|x|, w\rangle}\right\}.
    \end{align*}
    Therefore $x \in H_w$ for every $w\in \S$. This proves the inclusion “$\supseteq$”. To prove the other, fix a point $y\in K_F\backslash\{0\}$. Then 
    \begin{align*}
        |y|\left\langle \frac{y}{|y|},w\right\rangle = \langle y , w \rangle \leq F(w)\quad \forall w\in \S.
    \end{align*}
    Therefore,  
    \begin{align*}
        |y|\leq \frac{F(w)}{\langle y/|y|, w\rangle}\quad \forall w\in \S\text{ s.t. }\langle y/|y|,w\rangle>0.
    \end{align*}
    Hence $y\in \hypo(\w(F))$.

Let $\Omega\subseteq \Rn$ be any bounded subset. The \emph{support function of $\Omega$} is the function $\beta_\Omega:\Rn\to \R$ given by
\begin{align*}
    \beta_\Omega(x):= \sup_{y\in \Omega}\left\{\langle x,y\rangle \right\}
\end{align*}
for every $x\in \Rn$. Notice that the support function of a set is always convex and 1-homogeneous. Moreover, $\beta_\Omega$ is positive (as a 1-homogeneous function) if and only if $0$ is contained in the interior of $\Omega$. A rather trivial, yet important, property of the support function is that the hyperplane $\pi_v:= \beta_\Omega(v)v + v^\perp$ is a supporting hyperplane for $\Omega$ for every $v\in \S$.

\begin{lemma}\label{interiorConvexHull}
    Let $F\in \Integrand$ be an integrand.
    \begin{enumerate}[label = (\roman*)]
        \item \label{keyLem_*}$\d(F)$ is the support function of $K_F$. In particular, $\d(F)$ is a convex function. Moreover, if $G$ is any other convex, 1-homogeneous positive function such that $G\leq F$, then $G\leq \d(F)$.
        \item \label{keyLem_**} For every $v\in \S$ and $\overline{x}\in K_F$, the following are equivalent:
        \begin{enumerate}[label = (\alph*) ]
            \item \label{a}$\d(F)(v)= \langle v,\overline{x}\rangle$;
            \item\label{b} $\overline{x}\in\d(F)(v)v +v^\perp$;
            \item \label{c}$\overline{x} + v^\perp$ is a supporting hyperplane for $K_F$.
        \end{enumerate}
        \item \label{keyLem_iv} $\Ort(\partial K_F)\subseteq \Cont(F)$.
    \end{enumerate}
\end{lemma}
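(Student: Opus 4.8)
The three parts build on one another, so I would establish them in order.

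For \ref{keyLem_*}, the plan is to compute the support function of $K_F$ directly from the identity $K_F=\hypo(\w(F))$ proved just above. Writing a generic point of $\hypo(\w(F))$ as $\lambda\,\w(F)(u)u$ with $u\in\S$ and $0\le\lambda\le1$, for a fixed $v\in\S$ one has
\begin{align*}
\beta_{K_F}(v)=\sup_{u\in\S,\,0\le\lambda\le1}\lambda\,\w(F)(u)\langle v,u\rangle=\sup_{u\in\S}\w(F)(u)\langle v,u\rangle=\a(\w(F))(v)=\d(F)(v),
\end{align*}
where the middle step uses that $\w(F)>0$, so the optimal $\lambda$ is $1$ when $\langle v,u\rangle>0$ and $0$ otherwise (and some $u$ with $\langle v,u\rangle>0$ always exists). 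Since every support function is convex and $1$-homogeneous, and $0\in\interior K_F$ forces positivity, this already yields that $\d(F)$ is a convex integrand. For the minimality clause I would first note that $\w$ and $\a$ are monotone for the pointwise order, hence so is $\d$; then, given a convex, $1$-homogeneous, positive $G\le F$, I would invoke the standard convex-analytic fact that such a $G$ is the support function of the convex body $C_G:=\bigcap_{w\in\S}\{x:\langle x,w\rangle\le G(w)\}$, i.e. $G=\beta_{C_G}=\d(G)$. Combining with monotonicity, $G=\d(G)\le\d(F)$. (Equivalently, $G\le F$ gives $C_G\subseteq K_F$ and hence $\beta_{C_G}\le\beta_{K_F}$.)

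For \ref{keyLem_**}, I would treat the equivalences as essentially formal once \ref{keyLem_*} is in hand. The implication \ref{a}$\Leftrightarrow$\ref{b} is linear algebra: $\overline{x}\in\d(F)(v)v+v^\perp$ means $\overline{x}=\d(F)(v)v+w$ with $w\perp v$, and pairing with $v$ (using $|v|=1$) gives $\langle v,\overline{x}\rangle=\d(F)(v)$; conversely the orthogonal decomposition $\overline{x}=\langle\overline{x},v\rangle v+(\overline{x}-\langle\overline{x},v\rangle v)$ recovers \ref{b}. For \ref{a}$\Leftrightarrow$\ref{c}, recall that the affine hyperplane $\overline{x}+v^\perp$ equals $\{z:\langle z,v\rangle=\langle\overline{x},v\rangle\}$, so by the definition of supporting hyperplane it supports $K_F$ (with outer normal $v$) precisely when $\langle\overline{x},v\rangle=\max_{y\in K_F}\langle y,v\rangle=\beta_{K_F}(v)$, and by \ref{keyLem_*} this last quantity equals $\d(F)(v)$, which is exactly \ref{a}.

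The heart of the lemma is \ref{keyLem_iv}. Let $v\in\Ort(\partial K_F)$; since $\Cont(F)$ is a cone and $F,\d(F)$ are $1$-homogeneous, it suffices to treat $v\in\S$, so that $v=N_{\partial K_F}(y)$ for some $y\in\partial K_F$ with $\mathcal{N}_y\partial K_F\cap\S=\{v\}$. Consider the set of active constraints $A:=\{w\in\S:\langle y,w\rangle=F(w)\}$. Each $w\in A$ lies in the normal cone, because $\langle w,x-y\rangle=\langle w,x\rangle-F(w)\le0$ for every $x\in K_F\subseteq H_w$; hence $A\subseteq\mathcal{N}_y\partial K_F\cap\S=\{v\}$. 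I would then rule out $A=\emptyset$: the function $w\mapsto F(w)-\langle y,w\rangle$ is lower semicontinuous and nonnegative on the compact sphere $\S$, so it attains its minimum; if $A=\emptyset$ this minimum is a strictly positive $\delta$, and a short estimate shows $B(y,\delta)\subseteq K_F$, contradicting $y\in\partial K_F$. Therefore $A=\{v\}$, i.e. $\langle y,v\rangle=F(v)$. Finally, since $v$ is the outer normal at $y$ we have $\langle y,v\rangle=\max_{x\in K_F}\langle x,v\rangle=\beta_{K_F}(v)=\d(F)(v)$ by \ref{keyLem_*}, whence $F(v)=\d(F)(v)$ and $v\in\Cont(F)$.

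The two steps I expect to require the most care are the convex-analytic duality $G=\beta_{C_G}$ used in the minimality clause of \ref{keyLem_*} (this is where genuine convexity of $G$ enters, rather than mere monotonicity of $\d$), and the exclusion of $A=\emptyset$ in \ref{keyLem_iv}. In the latter, because $F$ is only lower semicontinuous the uniform gap $\delta>0$ cannot be produced from continuity; it must be extracted from the attainment of the minimum of a lower semicontinuous function on a compact set, and this lower semicontinuity is precisely the standing hypothesis that makes the argument go through.
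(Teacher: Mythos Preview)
Your argument is correct and follows essentially the same route as the paper: part~\ref{keyLem_*} is obtained by identifying $\beta_{K_F}$ with $\a\circ\w(F)$ via $K_F=\hypo(\w(F))$ and then comparing Wulff sets for the minimality clause, part~\ref{keyLem_**} is the same linear-algebra unpacking of support functions, and part~\ref{keyLem_iv} is the same ``active constraint at a boundary point must equal the unique normal'' argument. Your treatment of~\ref{keyLem_iv} is in fact a bit more careful than the paper's, which simply asserts the existence of $\overline{w}(y)$ with $F(\overline{w}(y))=\langle y,\overline{w}(y)\rangle$ from lower semicontinuity; you spell out why $A\neq\emptyset$ via the attained minimum of $w\mapsto F(w)-\langle y,w\rangle$ on $\S$ and the resulting inclusion $B(y,\delta)\subseteq K_F$.
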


\begin{proof}

    \ref{keyLem_*} Fix $v\in \S$. Then
    \begin{align*}
        \d(F)(v) = \sup_{w\in \S}\left\{\w(F)(w)\langle v,w\rangle \right\} = \sup_{y\in K_F}\left\{\langle v,y\rangle \right\} = \beta_{K_F}(v).
    \end{align*}
    Since both $\d(F)$ and $\beta_{K_F}$ are 1-homogeneous, they coincide in $\Rn$. Suppose now $G$ to be a convex, 1-homogeneous positive function. Then $G$ is the support function of the set
    \begin{align*}
        \Omega_G:=\{y\in\Rn : \langle v,y\rangle \leq G(v)\,\forall v\in \S\}.
    \end{align*}
    Therefore, if $G\leq F$, then $\Omega_G\subseteq K_F$ and, as $\d(F)$ is the support function of $K_F$, then $G\leq \d(F)$.
    
   \ref{keyLem_**} Fix $v\in \S$ and $\overline{x}\in K_F$. The equivalence of \ref{b} and \ref{c} is an immediate consequence of \ref{keyLem_*}. Therefore, it is enough to prove that  \ref{a} holds if and only if \ref{c} holds. 
    
    Suppose that $\overline{x} + v^\perp$ is a supporting hyperplane for $K_F$, then, by virtue of \ref{keyLem_*}, $\d(F)(v)v = \overline{x} + w$, for some $w\in v^\perp$. Therefore, taking the scalar product of both with $v$,
    \begin{align*}
        \langle v,\overline{x}\rangle = \d(F)(v)\langle v,v\rangle = \d(F)(v).
    \end{align*}
    Viceversa, if $\d(F)(v) = \langle v,\overline{x}\rangle$, then, using the definition of support function,
    \begin{align*}
        \langle v,\overline{x}\rangle \geq \langle v,y\rangle\quad \forall y\in K_F.
    \end{align*}
    Thus, as $\overline{x}\in K_F$, the plane $\overline{x} + v^\perp$ is a supporting hyperplane for $K_F$.

    \ref{keyLem_iv} Fix $\overline{x}\in \partial K$ such that the (outer) unit normal $v:=N_{\partial K_F}(\overline{x})$ is well defined. Then $\overline{x} + v^\perp$ is a supporting hyperplane of $K_F$, thus, by \ref{keyLem_**}, 
    \begin{align*}
        \d(F)(v)= \langle\overline{x},v\rangle. 
    \end{align*}
    On the other hand, $K_F$ is the intersection of the halfspaces 
    \begin{align*}
        \{z\in \Rn : \langle z,w\rangle \leq F(w)\}\quad w\in \S
    \end{align*}
    and $F$ is lower semicontinuous. Hence, for every $y\in \partial K_F$ there exists $\overline{w}(y)\in \S$ such that 
    \begin{align*}
        F(\overline{w}(y)) = \langle y,\overline{w}(y)\rangle.
    \end{align*}
    Observe that $F(\overline{w}(y))\overline{w}(y)+ (\overline{w}(y))^\perp$ is a supporting hyperplane passing through $y$. Since we are assuming that $\partial K_F$ admits a tangent space at $\overline{x}$, then $\overline{w}(\overline{x})= v$. This proves that $F(v) = \d(F)(v)$.

\end{proof}

As a consequence of Lemma \ref{interiorConvexHull}\ref{keyLem_*}, an integrand $F\in \Integrand$ is convex in the sense of Section \ref{intro} if and only if the function $x\mapsto F(x)$ is convex in $\Rn$ in the classical sense. 

\section{Proof of Theorem \ref{th_MainResult} and further remarks}

\begin{lemma}\label{O_F=PK_F}
    Let $O_F:=\hypo(\i\circ\d(F))$. Then $\Star O_F = K_F$ and $\Star K_F = O_F$. In particular, $O_F$ is a compact and convex subset containing $0$ in its interior. 
\end{lemma}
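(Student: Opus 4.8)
The plan is to identify $O_F$ explicitly as the polar body $\Star K_F$, after which both equalities become essentially formal. First I would unwind the definition of the polar hypograph. Since $\d(F)\in\Integrand$ is positive on $\S$, the function $\i\circ\d(F)(v)=1/\d(F)(v)$ is well defined, and by $1$-homogeneity of $\d(F)$ a point $x\neq 0$ satisfies $|x|\leq 1/\d(F)(x/|x|)$ if and only if $\d(F)(x)=|x|\,\d(F)(x/|x|)\leq 1$. Since $\d(F)(0)=0\leq 1$, this yields the clean description
\begin{align*}
    O_F=\{x\in\Rn:\d(F)(x)\leq 1\}.
\end{align*}

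Next I would invoke Lemma \ref{interiorConvexHull}\ref{keyLem_*}, which identifies $\d(F)$ with the support function $\beta_{K_F}$ of $K_F$. Substituting,
\begin{align*}
    O_F=\{x\in\Rn:\beta_{K_F}(x)\leq 1\}=\Big\{x\in\Rn:\sup_{y\in K_F}\langle x,y\rangle\leq 1\Big\}=\{x\in\Rn:\langle x,y\rangle\leq 1\ \forall y\in K_F\},
\end{align*}
which is exactly $\Star K_F$ by definition. This proves the second equality $\Star K_F=O_F$ directly. For the first equality, I would apply the polar operation to both sides and use Lemma \ref{Lem_starstar}: $\Star O_F=\Star\Star K_F=[K_F\cup\{0\}]$. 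As $K_F$ is convex and contains the origin, $[K_F\cup\{0\}]=K_F$, giving $\Star O_F=K_F$.

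Finally, for the concluding assertion I would read off the stated properties of $O_F$ from the identity $O_F=\Star K_F$: convexity and closedness of any polar body are automatic, since it is an intersection of closed half-spaces through the origin. For boundedness I would use that $K_F$ contains a ball $B(0,r)$ (as $0$ lies in its interior): testing $z\in\Star K_F$ against $r\,z/|z|\in K_F$ gives $|z|\leq 1/r$. For $0$ in the interior I would use compactness of $K_F$, so $K_F\subseteq B(0,R)$ for some $R$, whence $B(0,1/R)\subseteq\Star K_F$ by Cauchy--Schwarz. The argument is short, and its only genuinely delicate points are bookkeeping: verifying that $\i\circ\d(F)$ is a legitimate positive integrand (so that $\hypo$ behaves as expected) and that the reduction of the hypograph condition to $\d(F)(x)\leq 1$ correctly uses $1$-homogeneity, including the origin. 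I do not expect an obstacle beyond these, since the structural content is carried entirely by Lemma \ref{interiorConvexHull}\ref{keyLem_*} and Lemma \ref{Lem_starstar}.
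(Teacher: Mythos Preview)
Your proposal is correct and follows essentially the same route as the paper: both arguments reduce to showing $O_F=\{x:\d(F)(x)\leq 1\}=\Star K_F$ and then invoke Lemma~\ref{Lem_starstar} together with the fact that $K_F$ is compact, convex, and contains $0$ in its interior. The only cosmetic difference is that the paper expands $\d=\a\circ\w$ and uses $K_F=\hypo(\w(F))$ directly, whereas you appeal to Lemma~\ref{interiorConvexHull}\ref{keyLem_*} (which encapsulates the same computation); your additional verification of compactness and that $0$ lies in the interior is more explicit than the paper's, but not structurally different.
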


\begin{proof}
    By virtue of Lemma \ref{Lem_starstar} and the fact that $K_F$ is a compact convex subset containing $0$ in its interior, it is enough to show that $O_F = \Star K_F$.
    Using the definition of hypograph, $F$-crystal and of $\d = \a \circ \w$, 
    \begin{align*}
        O_F = \left\{x\in \Rn : \d(F)(x)\leq 1\right\}= \left\{x\in \Rn: \langle\w(F)(v)v,x\rangle \leq 1 \,\,\forall v\in \S\right\} = \Star K_F.
    \end{align*} 
\end{proof}

\begin{corollary}\label{Cor:Tan}
    For any $v\in \Rn$, $\d(F)(v) = \norm{v}_F$. 
\end{corollary}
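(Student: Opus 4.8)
The plan is to recognize $\norm{\cdot}_F$ as the Minkowski gauge (functional) of the convex body $\Star K_F$ and to identify this gauge with the support function of $K_F$, which is exactly $\d(F)$ by Lemma \ref{interiorConvexHull}\ref{keyLem_*}. The bridge between the two descriptions is furnished by Lemma \ref{O_F=PK_F}, which gives $\Star K_F = O_F = \hypo(\i\circ\d(F))$; this already packages the polar duality that underlies the statement, so that nothing beyond a short computation is required.

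First I would rewrite $O_F$ in the explicit sublevel form $O_F = \{x\in\Rn : \d(F)(x)\le 1\}$. This is essentially the displayed identity in the proof of Lemma \ref{O_F=PK_F}, but I would re-derive it directly from the definitions: a nonzero $x$ lies in $O_F=\hypo(\i\circ\d(F))$ iff $|x|\le (\i\circ\d(F))(x/|x|)=1/\d(F)(x/|x|)$, and using the $1$-homogeneity of $\d(F)$ together with $\d(F)(x/|x|)=\d(F)(x)/|x|$, this is equivalent to $\d(F)(x)\le 1$. Note here that $\d(F)(x)>0$ for $x\ne 0$, since by Lemma \ref{interiorConvexHull}\ref{keyLem_*} it is the support function of $K_F$, which contains $0$ in its interior.

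Next I would compute the gauge. By Lemma \ref{O_F=PK_F} we have $\Star K_F = O_F$, so $\norm{v}_F = \min\{\lambda\ge 0 : v\in\lambda O_F\}$. For $v\ne 0$ and $\lambda>0$, the condition $v\in\lambda O_F$ reads $\d(F)(v/\lambda)\le 1$, i.e.\ $\d(F)(v)\le\lambda$ by $1$-homogeneity; hence the set of admissible $\lambda$ is exactly the half-line $[\d(F)(v),+\infty)$ and its minimum is $\d(F)(v)$. For $v=0$ both sides vanish, since $\d(F)$ is $1$-homogeneous and positive. This yields $\norm{v}_F=\d(F)(v)$ for all $v\in\Rn$.

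The only point requiring care is that the $\min$ in the definition of $\norm{\cdot}_F$ is genuinely attained and equals the corresponding infimum; this is guaranteed because, again by Lemma \ref{O_F=PK_F}, $O_F$ is compact with $0$ in its interior, so $\d(F)(v)>0$ for $v\ne 0$ (legitimizing the division by $\lambda$) and the admissible set of scalings is a closed half-line. Consequently no real obstacle arises, and the statement follows at once from the sublevel description of $O_F$ combined with the $1$-homogeneity of $\d(F)$.
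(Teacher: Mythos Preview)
Your proof is correct and follows essentially the same route as the paper: both use Lemma \ref{O_F=PK_F} to identify $\Star K_F$ with $O_F=\{x:\d(F)(x)\le 1\}$ and then read off the Minkowski gauge via the $1$-homogeneity of $\d(F)$. Your version is simply more explicit about the edge cases ($v=0$, attainment of the minimum, positivity of $\d(F)$), which the paper's one-line display leaves implicit.
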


\begin{proof}
        Recall the definition of $\norm{\cdot}_F$ given in the introduction. Then, by Lemma \ref{O_F=PK_F},
\begin{align}\label{norm=D}
    \norm{v}_F = \min\left\{\lambda \geq 0 : v\in \lambda O_F\right\} = \min\{\lambda \geq 0 : \d(F)(v)\leq \lambda\} = \d(F)(v),
\end{align}
for every $v\in \Rn$.
    
\end{proof}

We are finally ready to prove Theorem \ref{th_MainResult}.

\begin{proof}[Proof of Theorem \ref{th_MainResult}]
    Let $F\in \Integrand$ and fix a curve $\gamma\in \ac([0,1];\Rn)$. Set $\hat{v}:=\gamma(1)-\gamma(0)$ and $\overline{x}\in K_F$ such that $\norm{\hat{v}}_F=\langle \hat{v},\overline{x} \rangle$. On the one hand, using \eqref{operatorInequalities} and Jensen's inequality,
\begin{align}\label{pf_inequalities}
    \l(\gamma) = \int_0^1 F(\dot\gamma(t))\,dt \geq \int_0^1 \d(F)(\dot\gamma(t))\,dt \geq \d(F)(\hat{v}),
\end{align}
and the two inequalities are equalities if and only if $\dot\gamma(t)\in \Cont(F)$ for almost every $t\in (0,1)$ and $\d(F)$ is linear in the image of $\dot\gamma$. On the other hand, by Corollary \ref{Cor:Tan}, $\d(F)$ is linear in the image of $\dot\gamma$ if and only if $\d(F)(\dot\gamma(t))= \langle \dot\gamma(t),\overline{x}\rangle$ for almost every $t\in (0,1)$. Thus by virtue of Lemma \ref{interiorConvexHull}\ref{keyLem_**}, the two inequalities of \eqref{pf_inequalities} are equalities if and only if, for almost every $t\in(0,1)$, $\dot\gamma(t)\in \Cont(F)$ and $\overline{x} + (\dot\gamma(t))^\perp$ is a supporting hyperplane for $K_F$ at $\overline{x}$.

\end{proof}

\begin{corollary}\label{cor:FconvSegments}
    Let $F$ be a convex integrand and fix $\gamma\in \ac([0,1];\Rn)$. If $\gamma$ is a reparametrization of a segment then $\gamma$ is a $F$-geodesic.
\end{corollary}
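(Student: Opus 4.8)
The plan is to sandwich the $F$-length of $\gamma$ between a universal lower bound and the value attained on the segment, rather than to verify the pointwise condition (3) of Theorem \ref{th_MainResult} directly. Indeed, the chain of inequalities \eqref{pf_inequalities} established in the proof of Theorem \ref{th_MainResult} was derived for an arbitrary curve in $\ac([0,1];\Rn)$ and, by Corollary \ref{Cor:Tan}, yields the bound $\l(\rho)\geq \d(F)(\hat{v})=\norm{\hat{v}}_F$ for every admissible competitor $\rho$ with $\rho(0)=\gamma(0)$ and $\rho(1)=\gamma(1)$, where $\hat{v}:=\gamma(1)-\gamma(0)$. Hence it suffices to check that $\gamma$ realises the equality $\l(\gamma)=\norm{\hat{v}}_F$.

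To compute $\l(\gamma)$ I would write $\gamma=\sigma\circ\tau$, where $\sigma(t):=(1-t)\gamma(0)+t\gamma(1)$ is the segment and $\tau:[0,1]\to[0,1]$ is the (strictly increasing) reparametrization with $\tau(0)=0$ and $\tau(1)=1$. Since the $F$-length is invariant under reparametrization (as recorded immediately after the definition of $\l$), one gets at once $\l(\gamma)=\l(\sigma)$. Alternatively, and without appealing to the invariance, one has $\dot\gamma(t)=\tau'(t)\,\hat{v}$ for almost every $t$ with $\tau'(t)\geq 0$, so that $1$-homogeneity of $F$ gives $F(\dot\gamma(t))=\tau'(t)F(\hat{v})$; integrating and using that $\tau$ is absolutely continuous with $\int_0^1\tau'\,dt=\tau(1)-\tau(0)=1$ yields $\l(\gamma)=F(\hat{v})$.

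It then remains to identify $F(\hat{v})$ with the lower bound, and this is exactly where convexity enters: $F$ convex means $\Cont(F)=\Rn$, that is $F=\d(F)$ on all of $\Rn$; in particular $F(\hat{v})=\d(F)(\hat{v})$, which equals $\norm{\hat{v}}_F$ by Corollary \ref{Cor:Tan}. Combining the previous two paragraphs gives $\l(\gamma)=F(\hat{v})=\norm{\hat{v}}_F$, so $\gamma$ attains the universal lower bound and is therefore an $F$-geodesic.

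The one point requiring care — and the reason I would \emph{not} invoke Theorem \ref{th_MainResult} verbatim — is that a reparametrization of a segment need not be regular: a strictly increasing absolutely continuous $\tau$ may have $\tau'=0$ on a set of positive measure, so $\dot\gamma$ can vanish on a non-negligible set and the standing hypothesis $|\dot\gamma|>0$ almost everywhere of the theorem may fail. The argument above sidesteps this difficulty entirely, since both the lower bound \eqref{pf_inequalities} and the length computation are valid for \emph{every} absolutely continuous curve, regular or not. Finally, the degenerate case $\gamma(0)=\gamma(1)$ is trivial: then $\hat{v}=0$, $\norm{\hat{v}}_F=0$, and the only reparametrization of the constant path is again constant, so that $\l(\gamma)=0$.
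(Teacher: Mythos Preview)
Your argument is correct and follows essentially the same route as the paper: both proofs show that for a reparametrized segment the chain \eqref{pf_inequalities} collapses to equalities (you phrase this as the direct computation $\l(\gamma)=F(\hat v)=\d(F)(\hat v)=\norm{\hat v}_F$, the paper simply asserts that both inequalities become equalities), and then conclude minimality from the universal lower bound rather than from Theorem~\ref{th_MainResult} itself. Your explicit remark that the regularity hypothesis of Theorem~\ref{th_MainResult} may fail for reparametrized segments, and that the argument via \eqref{pf_inequalities} does not need it, is a worthwhile clarification that the paper leaves implicit.
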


\begin{proof}
    Under the standing assumptions, for every path $\gamma\in \ac([0,1];\Rn)$ of the form $\gamma(t)=x_0+\tau(t)\hat{v}$, $\hat{v}\in \Rn\backslash\{0\}$, both the inequalities in \eqref{pf_inequalities} are equalities. Thus, is a $F$-geodesic.
    
\end{proof}

Now we exhibit a counter-example for the converse of Corollary \ref{cor:FconvSegments}, demonstrating that even when $F$ is a convex integrand, not every $F$-geodesic needs to be a line segment.

\begin{example}\label{ex:Fconv}
\newcommand{\s}{{\mathbb{S}^2}}
    Consider the 1-homogeneous positive function $F:\R^2\to \Rpo$ defined  as  
    \begin{align*}
        F(x,y):= |x| + |y| \quad \forall (x,y)\in \R^2.
    \end{align*}
    Since $F$ is a convex function, by Lemma \ref{interiorConvexHull}\ref{keyLem_*}, $F$ is a convex integrand. Fix $\hat{v}:=(1,1)$ and let $\gamma_0 \in \ac([0,1];\R^2)$ be the function $\gamma_0(t):= t\hat{v}$ for every $t\in [0,1]$. By virtue of Corollary \ref{cor:FconvSegments}, $\gamma_0$ is a $F$-geodesic. Therefore,
    \begin{align*}
       \min_{\substack{\gamma(0) = (0,0)\\\gamma(1)=(1,1)}} \left\{\l(\gamma)\right\}= \l(\gamma_0) = \int_0^1 F(\dot\gamma_0(t))\,dt = 2.
    \end{align*}
\begin{figure}[t]
        \centering
        \begin{tikzpicture}[x=2.5cm,y=2.5cm]
    \draw[very thin,-] (-1.2,0) -- (1.2,0); 
    \draw[very thin,-] (0,-1.2) -- (0,1.2); 

    \foreach \j [evaluate=\j as \aj using 2*pi*\j/(100)] in {0,...,99}
    {
        \coordinate (P) at ({(abs(cos(\aj r))+abs(sin(\aj r)))*cos(\aj r)}, {(abs(cos(\aj r))+abs(sin(\aj r)))*sin(\aj r)});
        
        
        \draw[ultra thin,lightgray]  ({(abs(cos(\aj r))+abs(sin(\aj r)))*cos(\aj r) - 1*sin(\aj r)} , {(abs(cos(\aj r))+abs(sin(\aj r)))*sin(\aj r) + 1*cos(\aj r)}) -- 
            ({(abs(cos(\aj r))+abs(sin(\aj r)))*cos(\aj r) + 1*sin(\aj r)} , {(abs(cos(\aj r))+abs(sin(\aj r)))*sin(\aj r) - 1*cos(\aj r)});
    }
    \draw[domain=0:2*pi,smooth,variable=\t,color=black,thick,samples=200] plot
        ({abs(cos(\t r))+abs(sin(\t r)))*cos(\t r)},
         {(abs(cos(\t r))+abs(sin(\t r)))*sin(\t r)});
         
    \draw[color=blue,thick] (-1,-1) rectangle (1,1);
    
    \draw[semithick, color=red] (0,0) -- (1,1); 
    \draw[semithick, color=red] (0,0) -- (0,1); 
    
    \draw[domain=0:1,smooth,variable=\x,color=orange,semithick] plot(\x, {\x^(1/2)});

    \draw[violet, semithick, ->] (0,1/3) -- ++(0,1/3);
    \draw[violet,semithick, ->] (0,1) -- ++(0,1/3);

    \draw[violet, semithick, ->] (1/3,1/3) -- ++(1/4.5,1/4.5);
    \draw[violet,semithick, ->] (1,1) -- ++(1/4.5,1/4.5);
    
    \draw[brown,semithick, ->] (1/2,{1/(2^(1/2))}) -- ++(1/4,{1/(4*(2^(1/2)))});
    \draw[brown,semithick, ->] (1,1) -- ++(1/4,{1/(4*(2^(1/2)))});

    \draw[violet, semithick] (-1.1,1.01) -- (1.1,1.01);
    \draw[violet, semithick] (0.3,1.7) -- (1.7,0.3);
    \draw[brown, semithick] (0.45,1.77) -- (1.58,0.179);
        \end{tikzpicture}
        \caption{graphical representation of Example \ref{ex:Fconv}.}
        \label{fig:1}
    \end{figure}
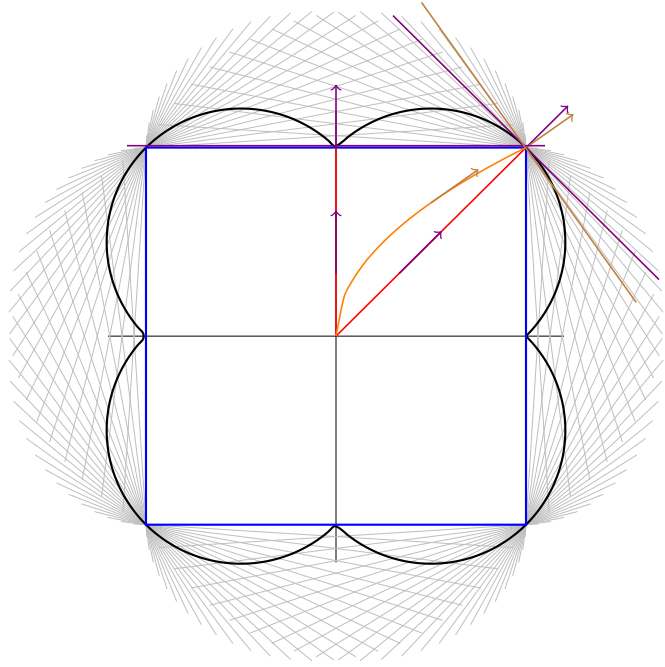
    Let $f,g:[0,1]\to[0,1]$ be any two absolutely continuous, strictly, increasing bijective functions and let $\gamma(t):= (f(t),g(t))$ for every $t\in [0,1]$. Then $\gamma\in \ac([0,1];\R^2)$ and satisfies $\gamma(0)=(0,0)$ and $\gamma(1)=(1,1)$. Moreover, by the fundamental theorem of calculus
    \begin{align*}
        \l(\gamma) = \int_0^1 (f'(t) + g'(t))\,dt = f(1)-f(0) + g(1)-g(0) = 2. 
    \end{align*}
    Therefore, $\gamma$ is a $F$-geodesics. This implies that there exist (infinitely many) $F$-geodesics connecting the points $(0,0)$ and $(1,1)$ that are different from (a reparametrization of) a line segment. 

    On the other hand, if $\rho=(\rho_1,\rho_2) \in \ac([0,1];\R^2)$ is such that $\rho(0) = (0,0)$ and $\rho(1)=(0,1)$. Then 
    \begin{align*}
        \l(\rho) = \int_0^1 |\rho'_1(t)|\,dt + \int_0^1 |\rho'_2(t)|\,dt \geq 1 + \int_0^1 |\dot \rho_2(t)|\,dt \geq 1,
    \end{align*}
    and the two equalities are both equalities if and only if $\rho_1$ is strictly increasing and $\rho_2\equiv 0$. Therefore, up to reparametrization, the line segment $t\mapsto (t,0)$ is the only $F$-geodesic connecting the points $(0,0)$ and $(1,0)$.

    A visual representation of this example is provided in Figure \ref{fig:1}. The thick black and blue lines represent $\graph(F)$ and the boundary of $K_F$ respectively.
    
\end{example}

\section{Proof of Theorem \ref{Cor_Main}}

Let $\Omega\subseteq \Rn$ be an arbitrary set.
A point $x\in \Omega$ is an \emph{extremal point of $\Omega$}, and we write $x\in \Extr(\Omega)$ if $x$ cannot be written as a strictly convex combination of any two other points in $\Omega$. Clearly, if $K\subseteq\Rn$ is a convex subset, then $\Extr(K)\subseteq \partial K$. 

The next result is well-known in the literature, and will play a key role in the sequel.

\begin{lemma}[Krein-Milman theorem]\label{convExtr}
    Every compact convex set in $\Rn$ is the convex hull of its extremal points.
\end{lemma}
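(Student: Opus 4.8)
The plan is to establish the nontrivial inclusion $K\subseteq [\Extr(K)]$ by induction on the dimension $d$ of the affine hull $A$ of $K$; the reverse inclusion $[\Extr(K)]\subseteq K$ is immediate, since $\Extr(K)\subseteq K$ and $K$ is convex, so $K$ contains the smallest convex set containing $\Extr(K)$. Throughout I would work inside $A$, so that ``boundary'' and ``interior'' mean the relative boundary and relative interior of $K$ viewed as a subset of the $d$-dimensional affine space $A$, within which $K$ has nonempty interior. The base case $d=0$ is trivial: then $K$ is a single point, which is its own unique extremal point.

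Before the induction I would isolate the structural lemma on which everything rests: if $\pi\subseteq A$ is a supporting hyperplane of $K$ (relative to $A$) and $F:=K\cap \pi$, then $\Extr(F)\subseteq \Extr(K)$. Writing $\pi=\{w:\langle w,v\rangle=\alpha\}$ with $K\subseteq\{w:\langle w,v\rangle\leq \alpha\}$, suppose $x\in\Extr(F)$ is written as $x=(1-\lambda)y+\lambda z$ with $y,z\in K$ and $\lambda\in(0,1)$. Pairing with $v$ and using $\langle x,v\rangle=\alpha$ together with $\langle y,v\rangle,\langle z,v\rangle\leq \alpha$ forces $\langle y,v\rangle=\langle z,v\rangle=\alpha$, hence $y,z\in F$; extremality of $x$ inside $F$ then gives $y=z=x$, so $x\in\Extr(K)$.

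For the inductive step, fix $x\in K$ with $d\geq 1$ and split into two cases. If $x$ lies in the relative boundary of $K$, the supporting-hyperplane theorem (already available in the paper, cf. the separation argument in the proof of Lemma \ref{Lem_starstar}) furnishes a supporting hyperplane $\pi\subseteq A$ through $x$; then $F=K\cap\pi$ is compact, convex, and of affine dimension at most $d-1$, so the inductive hypothesis gives $x\in[\Extr(F)]$, and the structural lemma yields $[\Extr(F)]\subseteq[\Extr(K)]$. If instead $x$ lies in the relative interior, I would choose any line $\ell\subseteq A$ through $x$; since $K$ is compact, $\ell\cap K$ is a segment $[x_1,x_2]$ with endpoints in the relative boundary, and $x=(1-\lambda)x_1+\lambda x_2$ for some $\lambda\in[0,1]$. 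By the boundary case each $x_i\in[\Extr(K)]$, and convexity of $[\Extr(K)]$ gives $x\in[\Extr(K)]$, closing the induction.

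The main obstacle is precisely the structural lemma together with the careful passage to the affine hull $A$, which guarantees both that the face $F$ has strictly smaller affine dimension and that supporting hyperplanes through relative boundary points genuinely exist; once these are secured, the two cases of the inductive step are routine. A minor point worth stating explicitly is that $[\Extr(K)]$ is to be read as the smallest convex set containing $\Extr(K)$ (as in the remark following the definition of convex hull), so that its convexity is what powers both the trivial inclusion and the interior-to-boundary reduction.
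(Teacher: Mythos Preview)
The paper does not prove this lemma: it is stated as a well-known result (with \cite{krein1940extreme} in the bibliography) and used as a black box in the proof of Theorem~\ref{Cor_Main}, so there is no ``paper's own proof'' to compare against. Your argument is the standard finite-dimensional proof by induction on the affine dimension, and it is correct: the structural lemma (extremal points of a face are extremal in $K$) is proved cleanly, the passage to the affine hull is handled properly so that relative interiors are nonempty and faces drop dimension, and the interior case is reduced to the boundary case via a line segment. Your closing remark that $[\Extr(K)]$ must be read as the smallest convex set containing $\Extr(K)$ is well taken, since the paper's displayed definition of $[\Omega]$ as the set of \emph{pairwise} convex combinations is literally too small for non-convex $\Omega$; the remark following that definition is what you actually need, and you invoke it correctly.
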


\begin{lemma}\label{tangent=extremal}
    Let $F\in \Integrand$ be an integrand. If $\d(F)(v)=1$ (i.e. $v\in \partial O_F$), then $v\in \Ort(\partial K_F)$ if and only if $v\in \Extr(O_F)$.
\end{lemma}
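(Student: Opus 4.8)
The natural strategy is to dualise everything through the polarity $\Star K_F=O_F$ and $\Star O_F=K_F$ furnished by Lemma \ref{O_F=PK_F}, thereby turning normal cones of $K_F$ into exposed faces of $O_F$. Throughout I would use that $\d(F)$ is the support function of $K_F$ (Lemma \ref{interiorConvexHull}\ref{keyLem_*} together with Corollary \ref{Cor:Tan}), so that the hypothesis $\d(F)(v)=1$ reads exactly as $v\in\partial O_F$, and that for $y\in\partial K_F$ the support function of $O_F$ satisfies $\beta_{O_F}(y)=1$, since the Minkowski gauge of $K_F$ equals $1$ on $\partial K_F$.

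The first step I would carry out is the identity, valid for every $y\in\partial K_F$,
\[
    \mathcal{N}_y\partial K_F=\left\{t z : t\ge 0,\ z\in O_F,\ \langle z,y\rangle=1\right\},
\]
that is, the normal cone of $K_F$ at $y$ is the cone spanned by the face of $O_F$ exposed in the direction $y$. One inclusion is immediate from $O_F=\Star K_F$: if $z\in O_F$ and $\langle z,y\rangle=1$, then $\langle z,x\rangle\le 1=\langle z,y\rangle$ for all $x\in K_F$, so $z\in\mathcal{N}_y\partial K_F$. For the converse, any nonzero $w\in\mathcal{N}_y\partial K_F$ satisfies $\langle w,y\rangle=\max_{x\in K_F}\langle w,x\rangle=\d(F)(w)>0$, whence $z:=w/\d(F)(w)$ lies on $\partial O_F$ with $\langle z,y\rangle=1$. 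Since each ray from the origin meets the compact body $\partial O_F$ exactly once, this cone intersects $\S$ in a single point precisely when the exposed face $\{z\in O_F:\langle z,y\rangle=1\}$ reduces to one point; in that case its unique element $v$ gives $N_{\partial K_F}(y)=v/|v|$. Consequently, a point $v\in\partial O_F$ belongs to $\Ort(\partial K_F)$ if and only if some $y\in\partial K_F$ exposes $\{v\}$ as a zero-dimensional face of $O_F$.

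Granting this, the implication $v\in\Ort(\partial K_F)\Rightarrow v\in\Extr(O_F)$ is short: if $y$ exposes $\{v\}$ and $v=\lambda v_1+(1-\lambda)v_2$ with $v_1,v_2\in O_F$ and $\lambda\in(0,1)$, then pairing with $y$ and using $\langle v_i,y\rangle\le 1=\langle v,y\rangle$ forces $\langle v_1,y\rangle=\langle v_2,y\rangle=1$, so $v_1,v_2$ lie in the exposed face $\{v\}$ and $v_1=v_2=v$. Hence $v$ is extremal.

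The converse, $v\in\Extr(O_F)\Rightarrow v\in\Ort(\partial K_F)$, is the delicate direction and is where I expect the main obstacle. Extremality only provides \emph{a} supporting hyperplane of $O_F$ through $v$, whereas the duality above requires a supporting hyperplane meeting $O_F$ \emph{only} at $v$. To close this gap I would begin with a supporting direction $y_0$ of $O_F$ at $v$, scaled so that $y_0\in\partial K_F$ and $\langle y_0,v\rangle=1$, and then vary $y$ among the supporting directions of $O_F$ at $v$, invoking the extremality of $v$ and Krein–Milman (Lemma \ref{convExtr}) to prevent every such supporting hyperplane from retaining a positive-dimensional contact set with $O_F$; the limiting direction then exposes $\{v\}$, giving $v\in\Ort(\partial K_F)$. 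The crux — and the step demanding the most care — is precisely to certify that this isolation succeeds, i.e. that the extremal points relevant here are in fact exposed, which is exactly the delicate point distinguishing $\Extr(O_F)$ from the set of exposed points produced by the normal-cone identity.
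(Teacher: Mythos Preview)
Your duality identity
\[
    \mathcal{N}_y\partial K_F=\bigl\{t z : t\ge 0,\ z\in O_F,\ \langle z,y\rangle=1\bigr\},\qquad y\in\partial K_F,
\]
is correct, and so is the conclusion that, for $v\in\partial O_F$, one has $v\in\Ort(\partial K_F)$ if and only if $\{v\}$ is an \emph{exposed} face of $O_F$. Your proof of $\Ort\Rightarrow\Extr$ is then exactly the paper's: if $y$ exposes $\{v\}$ and $v=(1-\lambda)\tilde u+\lambda\tilde w$ with $\tilde u,\tilde w\in O_F$, pairing with $y$ forces $\langle\tilde u,y\rangle=\langle\tilde w,y\rangle=1$, hence $\tilde u=\tilde w=v$; the paper phrases this as ``by uniqueness of the tangent space, $\tilde u=\tilde w=v$''.

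You are right that the converse is the delicate direction, and your hesitation is well founded. Note, however, that the paper does \emph{not} supply the missing argument either: its ``viceversa'' paragraph assumes $v=(1-\lambda)\tilde u+\lambda\tilde w$ with $\tilde u,\tilde w\in O_F\setminus\{v\}$, i.e.\ assumes $v\notin\Extr(O_F)$, and derives $v\notin\Ort(\partial K_F)$. That is the contrapositive of the implication already proved in the first paragraph, not a proof of $\Extr\Rightarrow\Ort$. So the paper establishes one direction twice and leaves the other open; comparing against it will not help you close the gap.

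More seriously, under your correct identification of $\Ort(\partial K_F)\cap\partial O_F$ with the set of exposed points of $O_F$, the implication $\Extr(O_F)\Rightarrow\Ort(\partial K_F)$ is exactly ``every extreme point of $O_F$ is exposed''. Since $O_F$ can be an arbitrary compact convex body with $0$ in its interior (take $F$ convex and set $K_F=\Star O_F$), and such bodies can have non-exposed extreme points (e.g.\ the convex hull of the unit disc and an exterior point: the two tangency points on the circle are extreme but not exposed), this implication fails in general. Your sketched plan of varying the support direction and invoking Krein--Milman cannot succeed without additional hypotheses; Straszewicz's theorem only gives density of exposed points among extreme points. The gap you flagged is therefore genuine, and it is shared by the paper's proof.
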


\begin{proof}
    Fix $v\in \Rn$ such that $\d(F)(v)=1$.
    If $v\in \Ort(\partial K_F)$, then exists $\overline{x}\in \partial K_F$ such that $\overline{x} + v^\perp$ is the affine tangent space of $\partial K_F$ at $\overline{x}$. Thus, there exists only one supporting hyperplane of $K_F$ at $\overline{x}$. Suppose that $\Tilde{u},\Tilde{w}\in O_F$ and $\lambda\in (0,1)$ are such that $v= (1-\lambda)\Tilde{u} + \lambda\Tilde{w}$.
    Then
    \begin{align*}
        1 = \d(F)(v) = \langle v, \overline{x}\rangle = (1-\lambda)\langle \Tilde{u},\overline{x}\rangle + \lambda \langle \Tilde{w},\overline{x}\rangle.
    \end{align*}
    This implies
    \begin{align*}
        1 =\d(F)(\Tilde{u}) = \langle \Tilde{u},\overline{x}\rangle\quad \text{and}\quad1 =\d(F)(\Tilde{w})= \langle \Tilde{w},\overline{x}\rangle.
    \end{align*}
    Therefore, by Lemma \ref{interiorConvexHull}\ref{keyLem_**}, $\Tilde{u} + \Tilde{u}^\perp$ and $\Tilde{w} + \Tilde{w}^\perp$ are supporting hyperplanes of $K_F$ at $\overline{x}$. By uniqueness of the tangent space, $\Tilde{u}=\Tilde{w}=v$.

    Viceversa, if $v= (1-\lambda)\Tilde{u} + \lambda \Tilde{w}$ for some $\Tilde{u},\Tilde{w}\in O_F\backslash\{v\}$ and $\lambda\in (0,1)$, then, arguing as before, one proves the existence of three different supporting hyperplanes for $K_F$ at $\overline{x}$. Therefore $v\not \in \Ort(\partial K_F)$.
    
\end{proof}

Let us introduce the following notation. If $v\in \Rn$, we define $\gamma_v\in\ac([0,1];\Rn)$ as $\gamma_v(t):= tv$ for every $0\leq t\leq 1$. The \emph{concatenation of two paths} $\gamma,\rho\in \ac([0,1];\Rn)$ such that $\gamma(0)=\rho(0)=0$ is the path $\gamma \diamond \rho \in \ac([0,1];\Rn)$ defined as
\begin{align*}
    \gamma\diamond \rho(t):=
    \begin{cases}
        \gamma(2t)&, \text{ if }0\leq t\leq \frac{1}{2}\\
        \rho(2t-1) + \gamma(1)&, \text{ if }\frac{1}{2}<t\leq 1
    \end{cases}.
\end{align*}
Observe that if $\gamma,\rho,\sigma \in \ac([0,1];\Rn)$, then the paths $\gamma\diamond (\rho\diamond \sigma)$ and $(\gamma \diamond \rho )\diamond \sigma$ differ only by a reparametrization. Therefore, with a small abuse of notation, when the choice of the parametrization is not important, we may simply write $\gamma_N\diamond \cdots \diamond \gamma_1$ for the concatenation of $N$ paths such that $\gamma_j(0)=0$ for every $1\leq j\leq N$.

Fix a vector $v\in \Rn$ and suppose $v = u_1 + \cdots + u_N$ for some vectors $u_1,...,u_N\in \Rn$. Using the definition of $\ldf(\cdot)$ and the 1-homogeneity of $\d(F)$, one immediately shows
    \begin{align}
        \ldf(\gamma_v) = \d(F)(v) \quad \text{and}\quad \ldf(\gamma_{u_{N}}\diamond \cdots \diamond \gamma_{u_1}) = \sum_{j=1}^N \d(F)({u_j}).
    \end{align}
    Moreover, by Lemma \ref{interiorConvexHull}\ref{keyLem_*}, Jensen's inequality for sums and the 1-homogeneity of $\d(F)$ it follows that
    \begin{align*}
        \d(F)(v) = N\d(F)\left(\frac{u_1+\cdots + u_N}{N}\right) \leq \sum_{j=1}^N \d(F)(u_j).
    \end{align*}
    Therefore,
    \begin{align}\label{ineq}
         \ldf(\gamma_{ u_1 + \cdots + u_N}) \leq \ldf(\gamma_{u_N}\diamond \cdots \diamond \gamma_{u_1}) \quad\forall u_1,..., u_N\in \Rn.
    \end{align}
    On the other hand, if there exist $\Tilde{u}_1,...,\Tilde{u}_N\in \Rn$ and $\lambda_1,...,\lambda_N\in [0,1]$ with $\lambda_1+...+\lambda_N=1$ such that 
    \begin{align*}
        v = \sum_{j=1}^N \lambda_j \Tilde{u}_j \quad \text{and}\quad \d(F)(\Tilde{u}_j)=\d(F)(v)\,\forall 1\leq j\leq N,
    \end{align*}
    then, setting $u_j := \lambda_j \Tilde{u}_j$ for each $1\leq j\leq N$ one obtains equality in \eqref{ineq}.

\begin{proof}[Proof of Theorem \ref{Cor_Main}]
    Let $F\in \Integrand$ be an integrand and $x,y\in \Rn$ be two distinct points. Without loss of generality, suppose $x=0$ and $\d(F)(y)=1$

    \ref{cor_i} If $y\in \Ort(\partial K_F)$, then exists $\overline{x}\in \partial K_F$ such that $y/|y| = N_{\partial K_F}(\overline{x})$ is the (outer) unit normal to $\partial K_F$ at $\overline{x}$. Therefore there exists one unique supporting hyperplane for $K_F$ at $\overline{x}$ and this is $\overline{x} + y^\perp$. Applying Theorem \ref{th_MainResult}, every geodesic $\gamma\in \Geo(0,y)$ must satisfy $\dot\gamma(t)\in \Cont(F)\cap \spn\{y\}$ for almost every $t\in(0,1)$, where
    \begin{align*}
        \spn\{y\}:= \{\lambda y: \lambda \geq 0\}.
    \end{align*}
    Since, by Lemma \ref{interiorConvexHull}\ref{keyLem_iv} $\Ort(\partial K_F)\subseteq \Cont(F)$, then $\Cont(F)\cap \spn\{y\}=\spn\{y\}$. In particular, $\gamma$ must be a reparametrization of $\gamma_y$.

    \ref{cor_ii} Suppose now $y\not \in \Ort(\partial K_F)$. Then, by Lemma \ref{tangent=extremal}, $y$ is not extremal in $O_F$. Using Lemma \ref{convExtr} and  Lemma \ref{tangent=extremal} once again, we find $\lambda\in (0,1)$ and $\Tilde{u},\Tilde{w}\in \Ort(\partial K_F)$ with $\d(F)(\Tilde{u})=\d(F)(\Tilde{w})=1$ and  such that $y = u+w$, where  $u:=(1-\lambda)\Tilde{u}$ and $w:=\lambda \Tilde{w}$. For any $\tau \in [0,1]$, consider the curve $\sigma^\tau:[0,1]\to\Rn$ defined as
    \begin{align*}
        \sigma^\tau(t):= \gamma_{(1-\tau)u}\diamond\gamma_{w}\diamond \gamma_{\tau u}.
    \end{align*}
    Then, for any $\tau_1\neq \tau_2$ we have that $\sigma^{\tau_1}$, is not a reparametrization of $\sigma^{\tau_2}$ and, by Lemma \ref{interiorConvexHull}\ref{keyLem_iv}, Corollary \ref{norm=D} and the above remark,
    \begin{align}\label{proof_cor3}
        \l(\sigma^\tau) =  \d(F)(u) + \d(F)(w) = \d(F)(y) = \norm{y}_F\quad \forall\tau \in(0,1).
    \end{align}
    As $\sigma^\tau(0)=0$ and $\sigma^\tau(1)=y$ for every $\tau\in[0,1]$, Theorem \ref{th_MainResult} and \eqref{proof_cor3} prove that $\{\sigma^\tau:0\leq\tau\leq 1\}$ is an infinite family of $F$-geodesic, each one of them identifying a different element in ${\Geo}_{/\sim}(0,y)$.
    
\end{proof}

\nocite{*}

{\scriptsize
}

\end{document}